\definecolor{link_red}{rgb}{0.7,0,0}
\definecolor{cite_blue}{rgb}{0,0,0.97}
\theoremstyle{plain}
\newtheorem{theorem}{Theorem}[section]
\newtheorem{proposition}[theorem]{Proposition}
\theoremstyle{definition}
\newtheorem{definition}[theorem]{Definition}
\begin{document}

\title{Population Models with Partial Migration}

\author{Anushaya Mohapatra}
\address{Department of Mathematics \\
Oregon State University \\
Corvallis, OR 97330}
\urladdr[Anushaya Mohapatra]{}

\author{Haley A. Ohms}
\address{Department of Integrative Biology \\
Oregon State University \\
Corvallis, OR 97330}
\urladdr[Haley Ohms]{}

\author{David A. Lytle}
\address{Department of Integrative Biology \\
Oregon State University \\
Corvallis, OR 97330}
\urladdr[Haley Ohms]{}

\author{Patrick De Leenheer}
\address{Department of Mathematics \\
Oregon State University \\
Corvallis, OR 97330}
\urladdr[Patrick De Leenheer]{}

\keywords{Basic reproduction number, Monotone systems, Global stability, Partial Migration }

\subjclass[2000]{37}

\date{\today}

\begin{abstract}
Populations exhibiting partial migration consist of two groups of individuals: Those that migrate between habitats, and those that remain 
fixed in a single habitat. We propose several discrete-time population models to investigate the coexistence of migrants and residents. 
The first class of models is linear, and we distinguish two scenarios. In the first, there is a single egg pool to which both populations contribute. A fraction of the eggs is destined to become migrants, and the remainder become residents.
In a second model, there are two distinct egg pools to which the two types contribute, one corresponding to residents and another to migrants. The asymptotic growth or decline in these models can be phrased in terms of the value of the basic reproduction number being larger or less than one respectively.
A second class of models incorporates density dependence effects. It is assumed that increased densities in the various 
life history stages adversely affect the success of transitioning of individuals to subsequent stages. Here too we consider models with one or two egg pools. Although these are nonlinear models, their asymptotic dynamics can still be classified in terms of the value of a locally defined basic reproduction number: If it is less than one, then 
the entire population goes extinct, whereas it settles at a unique fixed point consisting of a mixture of 
residents and migrants, when it is larger than one. Thus, the value of the basic reproduction number can be used to predict the 
stable coexistence or collapse of populations exhibiting partial migration.

\end{abstract}

\maketitle
\tableofcontents

\section{Introduction}
\label{s:intro}

The phenomenon of partial migration, where a population is composed of a mixture of individuals that migrate between habitats and others that remain resident in a single habitat  \cite{lack_problem_1943}, has received extensive attention from biologists seeking to understand its origin and maintenance \cite{chapman_ecology_2011}. Examples of partial migration are diverse and come from nearly every class of organisms. Classic examples include salmonid fishes (\emph{Oncorhynchus}, \emph{Salmo}, and \emph{Salvelinus}) that breed in streams, but contain some individuals that migrate to an ocean or lake and others that complete their entire life cycle in the stream \cite{dodson_evolutionary_2013}, house finches (\emph{Haemorhous mexicanus}) that share a common breeding site, but some fraction of individuals will migrate away from this site to overwinter \cite{belthoff_partial_1991}, and red-spotted newts (\emph{Notophthalmus viridescens}) that breed in a common pond, but a fraction will migrate to the forest to overwinter \cite{grayson_life_2011}.

In many cases, migratory and resident individuals can interbreed and produce offspring that can become either migratory or resident \cite{chapman_ecology_2011}. Given the unlikely scenario that migrant and resident forms have equal fitness, one type will have a higher fitness than the other and should "win" the evolutionary competition. Yet, despite this evolutionary intuition, partial migration is maintained in a surprisingly large number of organisms under a wide range of environmental circumstances \cite{chapman_ecology_2011}.

	A number of mathematical models have been developed to explain the evolution, maintenance, and resulting dynamics of partial migration, and these depend on assumptions such as frequency dependence, density dependence, and condition dependence (\cite{kokko_modelling_2007, lundberg_evolutionary_2013, kaitala_theory_1993, taylor_predicting_2007, hazel_polygenic_1990, hazel_modeling}). 
	Frequency dependence occurs when a vital rate (survivorship, fecundity, egg allocation strategy) depends on the proportion of individuals in the population adopting a particular strategy. For example, survivorship of migrants might be higher when migrants are rare relative to residents. Density dependence occurs when a vital rate depends on the total number of individuals within a stage or class, resulting in a non-linear relationship between numbers of individuals and population growth. 
Condition dependence occurs when external factors such as temperature or food availability influence the decision to migrate or not. In particular, density dependence has been identified as an important factor for coexistence of migrants and residents, but it has only been explored in the context of frequency dependence \cite{kaitala_theory_1993, kokko_dispersal_2001}. For example, models that use an evolutionarily stable strategy approach, such as Kaitala and others (\cite{kaitala_theory_1993}), inherently involve frequency dependence among strategy types, in addition to other factors of interest, such as density dependence or condition dependence.  Our goal in this paper is to explore the circumstances under which density dependence alone can lead to the stable coexistence of resident and migratory forms.

	We based our general model structure on the life history of the salmonid fish \emph{Oncorhynchus mykiss}, which expresses an ocean migratory form (steelhead) as well as a freshwater resident form (rainbow trout). This is an example of 'non-breeding partial migration' \cite{chapman_ecology_2011}, in which migrants and residents interbreed in a common habitat. Steelhead and rainbow trout spawn in freshwater streams and their young rear in these streams from one to three years \cite{busby_status_1996}. After this period some individuals will become steelhead and migrate to the ocean and others will remain in the stream as rainbow trout. Using a Leslie matrix model framework based on \emph{O. mykiss}, we assessed conditions for coexistence that do not rely on frequency dependence, but instead rely only on stage-specific density-dependence. Although the model is based on \emph{O. mykiss}, we use the terminology ‘migrants’ and ‘residents’ throughout this paper to highlight the applicability of this model framework to many partially migratory species.  We prove that a locally defined fitness quantity ($R_0$) determines the global dynamics of the system, and that there exist stable equilibrium points that allow the coexistence of both resident and migratory forms.

\section{Linear population models}
\label{s:l_stat_res}
We start with a linear stage-structured population model, representing either an isolated population of only migrants or residents with $n$ stages. This is a single-sex model where we are only modeling females.
Later we will consider various coupled population models of migrants and residents.  

Let $x_i$ denote the number of (female) individuals in stage $i$, where $i$ ranges from $1$ to $n$. We let $t_i$ be the expected transition probabilities (so that $0\leq t_i\leq 1$) for individuals from stage $i$ to stage $i+1$ for $i=0,\dots, n-1$, and let $t_n$ denote the survival probability of individuals in the final $n$th stage. Finally, we let $f_i$ be the fecundities (number of eggs) for $i=2,\dots,n$. They represent the expected number of offspring produced by an individual in the $i$th stage.

The dynamics of the number of individuals in the various stages is described by the following Leslie model:
\begin{equation}
\label{model}
\mbi{x}(t+1)= A_{n} \mbi{x}(t),
\end{equation}\\
with
$A_{n}=\left( \begin{array}{cccc}
                                      0 & f_{2} & \cdots & f_{n} \\
                                      t_{1} & 0 & \cdots & 0\\
                               
                                     \vdots & \ddots  &  \vdots & \vdots\\
                                      0 &  \cdots  & t_{n-1} & t_{n}\end{array}\right)$
and  $x=\left( \begin{array}{c}
                                      x_{1}\\
                                       \vdots \\
                                       x_{n}                            
                                     \end{array}\right).$\\
 
We assume that the matrix $A_n$ is irreducible, or equivalently that all $t_i>0$ for $i=1,\dots,n-1$, and that $f_n>0$ \cite{caswell_matrix_2000}. The asymptotic behavior of the non-negative solutions of ~\eqref{model} is determined by the eigenvalue $\lambda$ of $A_n$ that has largest modulus. Since $A_n$ is a matrix whose entries are non-negative, the Perron-Frobenius theorem implies that $\lambda$ 
 is in fact real and non-negative. Consequently, when $\lambda<1$, all solutions converge to zero, and when $\lambda>1$, then all solutions 
 eventually grow at exponential rate $\lambda$. Unfortunately, calculating $\lambda$ in terms 
 of the entries of the matrix $A_n$ is generally not possible. However, an associated quantity, known as the \textbf{basic reproduction number} $R_{0}$, can be determined explicitly. Letting $\rho(M)$ be the spectral radius of any square matrix $M$, the basic 
 reproduction number $R_0$ is defined as \cite{Population,li-schneider,allen}:
$$
R_{0}:= \rho \left(F_{n}(I-T_{n})^{-1}\right),
$$ 
where the matrices $F_{n}$ and $T_{n}$ are obtained from $A_{n}$ by setting all $t_{i}=0$ and  all $f_{i}=0$ respectively. A straightforward calculation shows that
\begin{equation}\label{R0-explicit}
R_0=t_1f_2+t_1t_2f_3+\dots+(t_1\dots t_{n-2})f_{n-1}+\frac{t_1\dots t_{n-1}}{1-t_n}f_n,
\end{equation}
and reveals the interesting biological interpretation for $R_0$ as {\it the expected total number of offspring contributed to stage $1$, generated by a single stage $1$ individual over its lifetime}. Although this statement is often taken as the definition of the basic reproduction number in the literature, we 
remark that this is only possible because the model has only one stage in which new offspring is generated. When there are several stages 
in which offspring are generated (we will later consider models for which this is the case), this clear biological interpretation for the 
basic reproduction number is no longer possible. 

There is an important connection between $\lambda$ and $R_0$, see \cite{li-schneider}: Exactly one of the $3$ following scenarios occurs:
\begin{equation}\label{relation}
\begin{cases}
0\leq R_0\leq \lambda <1,\textrm{ or }\\
\lambda=R_0=1,\textrm{ or}\\
1<\lambda \leq R_0.
\end{cases}
\end{equation}
Consequently, the long-term behavior of solutions of $(\ref{model})$ can be predicted based on whether $R_0$ is less than, or larger than $1$: If $R_0<1$, then all solutions converge to zero, and if $R_0>1$, then all solutions eventually grow exponentially. 
In other words, it does not matter whether we use the value of $\lambda$, or the value of $R_0$ to determine population growth or extinction: the location of either one with respect to the threshold $1$, is sufficient to decide this issue. Of course, the fact that $R_0$ is usually more easily computed than $\lambda$, makes it the natural candidate to address this question, and this probably explains why $R_0$ appears to be a more popular measure than $\lambda$ in the mathematical population biology literature. 

A final remark concerns the sensitivity of both $\lambda$ and $R_0$ with respect to the natural demographic model parameters 
$t_i$ and $f_i$: Both quantities are increasing with respect to increases in any of these parameters. Indeed, for $\lambda$ this property is a consequence of the Perron-Frobenius Theorem which says that the spectral radius of a non-negative, irreducible matrix, increases with 
any of the matrix entries. For $R_0$, this property is immediately clear from $(\ref{R0-explicit})$. Thus, from the perspective of a 
sensitivity analysis, it also does not matter which of the two quantities, $\lambda$ or $R_0$, we consider.

Whereas model $(\ref{model})$ represents an isolated population of either migrants or residents, in practice these populations 
are coupled. In the next few sections we explore how various assumptions about how exactly these populations are 
coupled affect the long term dynamics of the coupled populations.

\subsection{A coupled model with a single egg stage} 
\label{C:M1}
Here we consider the case where the population is composed of migrants and residents that have $n$ and $m$ stages respectively. The potentially different values $n$ and $m$  reflect the 
fact that migrants and residents will sometimes reproduce (i.e., spawn) at different ages. In the case of steelhead and rainbow trout, steelhead will migrate to the ocean and remain there for extended periods of time to grow before returning as adults, whereas rainbow trout and reach adulthood more quickly \cite{busby_status_1996}. 

The key feature of this model is the assumption that there is a single pool of eggs to which both spawning migrants and residents contribute. This would be the biological case where an event occurs between the egg and juvenile stage that determines an individual's life history trajectory (i.e., whether they become resident or migrant). This could be a threshold growth value \cite{dodson_evolutionary_2013}, a threshold lipid value \cite{sloat_individual_2014}, or a proximiate cue such as density \cite{tomkins_population_2004}. 
A fraction $0<\phi<1$ of the eggs become migrants, and the remaining fraction $1-\phi$ become residents. As before, the transition probabilities between the migrant stages are denoted by $t_i^s$ with $i=1,\dots,n$ and those between the resident stages by $t_j^r$ with $j=1,\dots,m$. Similarly, the migrant fecundities are $f_i^s$ with $i=2,\dots,n$, and the resident fecundities are $f_j^r$ with $j=1,\dots,m$. We also continue to assume that all $t_i^s$, and $t_j^r$, and all  $f_n^s$ and $f_m^r$ are positive. The coupled model takes the following form:

\begin{equation}
\label{model:1c}
\mbi{X}(t+1)= A_1\mbi{X}(t),
\end{equation}  
where
$A_1=\left( \begin{array}{cccccccc}
                                      0 & f_{2}^s & \cdots & f_{n}^s&f_2^r &\cdots & f_{m-1}^r&f_m^r\\
                                      \phi t_{1}^s & 0 & \cdots & 0&0&\cdots& 0&0 \\
                                      \vdots & \ddots  &  \vdots & \vdots &\vdots&\cdots&\vdots&\vdots\\
                                      0 &  \cdots  & t_{n-1}^s & t_{n}^s&0&\cdots&0&0\\
                                      (1-\phi)t_1^r&0&\cdots&0&0&\cdots&0&0\\
                                      0&0&\cdots&0&t_2^r&\cdots & 0&0\\
                                      \vdots&\vdots&\cdots&\vdots&\vdots& \ddots&\vdots& \vdots \\
                                      0&0&\cdots&0&0&\cdots & t_{m-1}^r& t_m^r
                                      \end{array}\right)$,\;
$\mbi{X}=\left(
\begin{array}{c}
\text{eggs}(x_1)   \\
\text{first migrant stage} (x_2)  \\
\vdots \\
\text{last migrant stage}  (x_n)  \\
\text{first resident stage}  (x_{n+1})  \\
\vdots \\
\text{last resident stage}  (x_{n+m-1})  \\
\end{array}
\right)$\\ 
\vspace{0.2in}

We can associate the basic reproduction number 
to the coupled system ~\eqref{model:1c}, which is defined in the familiar way:
$$
R_0^{c1}:=\rho \left(F_1(I-T_1)^{-1} \right),
$$ 
where $F_1$ and $T_1$ are obtained from $A_1$ by setting all transition probabilities $t_i^s$ and $t_j^r$, respectively all fecundities $f_i^s$ and $f_j^r$, equal to zero. Since $0<\phi<1$, the matrix $A_1$ is a non-negative irreducible matrix having a real, nonnegative eigenvalue $\lambda_1$ of maximal modulus, and all the remarks about the relationship between $\lambda_1$ and $R_0^{c1}$, mentioned for model $(\ref{model})$, are also valid here. In particular, $(\ref{relation})$ holds when replacing $R_0$ by $R_0^{c1}$ and $\lambda$ by $\lambda_1$, and therefore population growth or extinction is determined by the location of either $\lambda_1$ or $R_0^{c1}$ with respect to the threshold $1$. 
Another important feature of $R_0^{c1}$ is that it has the usual biological meaning of the expected total number of eggs, generated by a single egg over its lifetime.  
The remarks about the sensitivity of $R_0^{c1}$ with respect to any of the model's transition probabilities and fecundities continues to hold as well.

The sensitivity of $R_0^{c1}$ with respect to the allocation parameter $\phi$ is not immediately obvious. Indeed, one of the entries in $A_1$'s first column increases with $\phi$, whereas another decreases. However, 
it turns out that there is a particularly elegant formula for $R_0^{c1}$ in terms of the basic 
reproduction numbers associated to an isolated $n$-stage migrants model and an isolated $m$-stage resident model.  
To make this precise, we let $R_0^s$ denote the basic reproduction number of model $(\ref{model})$ with $A_n^s$ instead of $A_n$, in which we replace the $f_i$ by $f_i^s$, and the $t_i$ by $t_i^s$. Similarly, we let $R_0^r$ be the basic reproduction number of model $(\ref{model})$ with $A_m^r$ (i.e. there are $m$ instead of $n$ stages), and replace the $f_j$ by $f_j^r$, and the $t_j$ by $t_j^r$. With this notation, it is not difficult to show that $R_0^{c1}$ is a convex combination of $R_0^s$ and $R_0^r$ with weights $\phi$ and $1-\phi $ respectively:
 \begin{equation}\label{convex}
 R_{0}^{c1}(\phi)= \phi R_{0}^{s} + (1-\phi) R_{0}^{r}
 \end{equation}
 
Clearly, $R_0^{c1}(\phi)$ is linear function of the allocation parameter $\phi$, and consequently, $R_0^{c1}(\phi)$ is always between $R_0^s$ and $R_0^r$. The maximal and minimal values of $R_0^{c1}(\phi)$ are $\max(R_0^s,R_0^r)$ and $\min(R_0^s,R_0^r)$ respectively, and each is achieved for a single, extreme value of $\phi$ -namely $\phi=0$ or $1$- unless 
 $R_0^s=R_0^r$, in which case $R_0^{c1}(\phi)$ is independent of $\phi$: $R_0^{c1}(\phi)\equiv R_0^s$ for all $\phi$ in $[0,1]$.
 
 These observations imply that generically (i.e. when $R_0^s\neq R_0^r$), there is no value of the allocation parameter $\phi$ that 
 gives rise to a basic reproduction number $R_0^{c1}$ of the coupled model $(\ref{model:1c})$ which is higher than both basic reproduction numbers $R_0^s$ and $R_0^r$, associated with the models of the isolated migrants and isolated residents respectively. 
 In fact, the maximal $R_0^{c1}$ is achieved for an extreme value of the allocation parameter: when $R_0^s>R_0^r$,  the maximum is $R_0^s$, and it is achieved when $\phi=1$, which corresponds to the scenario in which all eggs become migrants. Similarly, when $R_0^s<R_0^r$,  the maximum is $R_0^r$, and it is achieved when $\phi=0$, which corresponds to the scenario in which all eggs become residents.

\subsection{A coupled model with two distinct egg stages}  
\label{C:2}

The key feature of this model is the assumption that there are two pools of eggs: one pool will become migrants and the other will become residents. This would be the biological case where life history (i.e., migrant or resident) is determined at birth. Spawning adult migrants and residents contribute offspring to each pool of eggs. We let $0<\phi_{s}<1$ be the expected fraction of eggs generated by migrants that enter the migrant egg stage. Hence, $1-\phi_s$ represents the fraction of eggs generated by migrants that enter the resident egg stage. Similarly,  $\phi_{r}$ is the expected fraction of eggs generated by residents that enter the resident stage.  
 
We let $A_{n}^{s}$ and $A_{m}^{r}$  be the system matrices of an isolated $n$-stage migrant and isolated $m$-stage resident model, as defined in the previous section, and denote the respective basic reproduction numbers by $R_0^s$ and $R_0^r$ respectively. 
The coupled model then takes the form:
\begin{equation}
\label{model:2}
\mbi{X}(t+1)= A_2\mbi{X}(t)
\end{equation}\\
with

$A_2  = \begin{pmatrix} 
 \phi_{s}*A_{n}^{s} & (1-\phi_{r})* B_{n\times m}^{r}\\
 (1-\phi_{s})* B_{m\times n}^{s} &  \phi_{r}*A_{m}^{r} \\
 \end{pmatrix},\;\; \mbi{X}=\left(
\begin{array}{c}
\text{eggs that become migrants}(x_1)   \\
\text{first migrant stage} (x_2)  \\
\vdots \\
\text{last migrant stage}  (x_n)  \\
\text{eggs that become residents}  (x_{n+1})  \\
\vdots \\
\text{last resident stage}  (x_{n+m})  \\
\end{array}
\right),\vspace{0.2in}$

where $a * A$  is the matrix obtained from $A$ by multiplying all entries of the first row of $A$ by the scalar $a$, and not changing the entries of any of the other rows of $A$. The matrices $B_{n\times m}^r$ and $B_{m\times n}^s$ are rectangular with $n$ rows and 
$m$ columns, respectively $m$ rows and $n$ columns. The first row of the matrix $B_{n\times m}^r$ is  the same as the first row of the matrix $A_m^r$, and all other rows of $B_{n\times m}^r$ consist of zeros. The matrix $B_{m\times n}^s$ is constructed in a similar way from the matrix $A_n^s$. 

Since $A_2$ is an irreducible, non-negative matrix, we can associate the basic reproduction number to model $(\ref{model:2})$, which is defined as follows:
$$
R_0^{c2}:=\rho \left(F_2(I-T_2)^{-1} \right),
$$ 
where $F_2$ and $T_2$ are obtained from $A_2$ by setting all transition probabilities $t_i^s$ and $t_j^r$, respectively all fecundities $f_i^s$ and $f_j^r$, equal to zero. The matrix $A_2$ has a real, non-negative eigenvalue $\lambda_2$ of largest modulus, and the 
comments in the previous subsection regarding the relationship between $\lambda_1$ and $R_0^{c1}$ carry over to the relationship between  $\lambda_2$ and $R_0^{c2}$. The remarks on the sensitivity of  $\lambda_2$ and $R_0^{c2}$ with respect to the transition probabilities and fecundities carry over as well.

There is one notable difference however, namely that unlike $R_0^{c1}$, the basic reproduction number $R_0^{c2}$ can not be interpreted biologically as some expected number of eggs generated by a single egg over its lifetime, because here there are two distinct egg stages. 

To exhibit the dependence of $R_0^{c2}$ on the allocation 
parameters $\phi_s$ and $\phi_r$, we note that a straightforward calculation shows that:
\begin{eqnarray*}
\label{eq:1}
R_{0}^{c2}(\phi_s,\phi_r) &= &\rho \begin{pmatrix}
\phi_s R_0^s& (1-\phi_r)R_0^r\\
(1-\phi_s)R_0^s&\phi_r R_0^r
\end{pmatrix}\\
&=&\frac{\phi_sR^s_0 + \phi_rR^r_0 + \sqrt{(\phi_sR^s_0 + \phi_rR^r_0)^2 - 4R^s_0R^r_0(\phi_s + \phi_r - 1)}}{2},
\end{eqnarray*}
which is, in general, not a convex function of $(\phi_s,\phi_r)$ in $D=\{(\phi_{s}, \phi_{r}): 0 \leq \phi_{r} \leq 1, 0 \leq \phi_{s} \leq 1\}$.
 
 \begin{proposition}
 \label{th:1}
 If  $R^{s}_{0}=R^{r}_{0}$, then $R_{0}^{c2}(\phi_{s}, \phi_{r})$ is  a constant function with value  $R^{s}_{0}$.\\
If $R_0^s\neq R_0^r$, then $R_{0}^{c2}(\phi_{s}, \phi_{r})$  has no interior maxima or minima on $D$.  
Moreover, for  $R^{r}_{0} < R^{s}_{0}$, $R_{0}^{c2}(\phi_{s}, \phi_{r})$ attains its maximum $R^{s}_{0}$ on any point of the boundary of $D$ where $\phi_{s}=1$.  For  $R^{s}_{0} < R^{r}_{0}$, $R_{0}^{c2}(\phi_{s}, \phi_{r})$ attains its maximum $ R^{r}_{0}$ on any point of the boundary of $D$ where $\phi_{r}=1$.
\end{proposition}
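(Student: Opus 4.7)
The plan is to reduce everything to a Perron analysis of the $2 \times 2$ matrix
\[
M(\phi_s,\phi_r) \;=\; \begin{pmatrix} \phi_s R_0^s & (1-\phi_r)R_0^r \\ (1-\phi_s)R_0^s & \phi_r R_0^r \end{pmatrix}
\]
that appears just before the proposition. Write $a=\phi_s$, $b=\phi_r$, $s=R_0^s$, $r=R_0^r$, and $R=R_0^{c2}(a,b)$. Then $R$ is the larger root of the characteristic equation $R^2 - TR + sr(a+b-1) = 0$, where $T = as+br$. A short expansion yields the discriminant identity
\[
g(a,b) \;:=\; T^2 - 4sr(a+b-1) \;=\; (as-br)^2 + 4sr(1-a)(1-b),
\]
which is nonnegative on all of $D$ and, crucially, strictly positive on its interior.

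First I would dispose of the case $s=r$. Then both columns of $M(a,b)$ sum to $s$, so $(1,1)$ is a positive left eigenvector with eigenvalue $s$; by Perron--Frobenius this identifies $s$ with the spectral radius, giving $R_0^{c2}(a,b) \equiv s$ on $D$ and proving the first claim.

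Next, for the generic case $s \ne r$ I would rule out interior extrema by implicit differentiation of the characteristic equation. Using $2R - T = \sqrt{g}$ together with the strict positivity of $g$ on the interior, differentiating with respect to $a$ and $b$ gives
\[
R_a \;=\; \frac{s(R-r)}{\sqrt{g}}, \qquad R_b \;=\; \frac{r(R-s)}{\sqrt{g}}.
\]
A simultaneous zero of the two partial derivatives would force $R=r$ and $R=s$, contradicting $s \ne r$; hence there is no interior critical point, and in particular no interior maximum or minimum of $R_0^{c2}$ on $D$.

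Finally, to pin down where the maximum is attained I would combine a column-sum bound with an explicit edge computation. The columns of $M(a,b)$ always sum to $s$ and $r$, so $R(a,b) \le \max(s,r)$ throughout $D$. When $s > r$ and $\phi_s = a = 1$, the matrix $M(1,b)$ is upper triangular with diagonal entries $s$ and $br \le r < s$, so $R(1,b) = s$ for every $b\in[0,1]$, meeting the upper bound on the entire edge $\{\phi_s = 1\}$. The case $s < r$ is entirely symmetric, with the roles of $s,\phi_s$ and $r,\phi_r$ swapped. I expect the only mildly subtle step to be verifying $g>0$ in the interior (so that the implicit-differentiation formulas for $R_a$ and $R_b$ are legitimate), which is exactly why the sum-of-squares factorization of $g$ is the first thing I would record.
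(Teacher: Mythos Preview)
Your proof is correct and follows the same overall strategy as the paper's: rule out interior critical points via the partial derivatives, then locate the maximum on the boundary. The execution differs in two places. First, the paper differentiates the closed-form expression for $R_0^{c2}$ directly, obtaining
\[
2\frac{\partial R_0^{c2}}{\partial \phi_s}=R_0^s+\frac{(R_0^s\phi_s+R_0^r\phi_r)R_0^s-2R_0^sR_0^r}{\sqrt{g}},
\]
and similarly for $\phi_r$, then simplifies the system $\partial_{\phi_s}R_0^{c2}=\partial_{\phi_r}R_0^{c2}=0$ to $R_0^s=R_0^r$. Your implicit differentiation of the characteristic equation gives the equivalent but tidier formulas $R_a=s(R-r)/\sqrt{g}$ and $R_b=r(R-s)/\sqrt{g}$, from which the contradiction is immediate; the sum-of-squares identity $g=(as-br)^2+4sr(1-a)(1-b)$ you record up front is exactly what justifies dividing by $\sqrt{g}$ in the interior, a point the paper leaves implicit. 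Second, for the boundary the paper evaluates $R_0^{c2}$ on each of the four edges of $D$, checks that the single-variable restrictions to $\{\phi_s=0\}$ and $\{\phi_r=0\}$ have no interior critical points, and compares the corner values. Your column-sum bound $R_0^{c2}\le\max(R_0^s,R_0^r)$ together with the upper-triangular computation on $\{\phi_s=1\}$ replaces that case analysis with a single Perron--Frobenius observation, which is shorter and arguably more conceptual.
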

\begin{proof}
\textbf{Case 1}: $R_0^s= R_0^r$. In this case, a straightforward calculation shows that $R_0^{c2}(\phi_s,\phi_r)\equiv R_0^s$.\\

\textbf{Case 2}: $R_0^s\neq R_0^r$. Evaluating the partial derivatives of $R_0^{c2}$ with respect to $\phi_s$ and $\phi_r$ yields that

\begin{eqnarray*}
2\frac{\partial R_{0}^{c2}}{\partial \phi_{s}} &=&  R^{s}_{0}+ \frac{(R^{s}_{0} \phi_{s}+ R^{r}_{0} \phi_{r}) R^{s}_{0}-2 R^{s}_{0} R_{0}^{r}}{\sqrt{(\phi_{s}R^{s}_{0} + \phi_{r} R^{r}_{0})^2 - 4R^s_0R^r_0(\phi_{s} + \phi_{r} - 1)}}\\
2\frac{\partial R_{0}^{c2}}{\partial \phi_{r}} &=&  R^{r}_{0}+ \frac{(R^{s}_{0} \phi_{s}+ R^{r}_{0} \phi_{r}) R^{r}_{0}-2 R^{s}_{0} R_{0}^{r}}{\sqrt{(\phi_{s}R^{s}_{0} + \phi_{r} R^{r}_{0})^2 - 4R^s_0R^r_0(\phi_{s} + \phi_{r} - 1)}}
\end{eqnarray*}

Setting $\frac{\partial R_{0}^{c2}}{\partial \phi_{r}}=\frac{\partial R_{0}^{c2}}{\partial \phi_{r}}=0$ and simplifying leads to $R^{s}_{0}= R^{r}_{0}$.  Hence, since by assumption $R_0^s\neq R_0^r$, there cannot be any maxima or minima  in the interior of $D$.\\ 
If $R^{r}_{0} < R^{s}_{0}$, then a simple calculation shows that:
\begin{equation}
\label{eq:6}
 R_{0}^{c2}(\phi_{s}, \phi_{r}) =
 \begin{cases}
R_0^s  & \text{if $\phi_{s}=1$} \\
  \frac{\phi_{r}R^r_0 + \sqrt{(\phi_rR^r_0)^2 - 4R^s_0R^r_0(\phi_r -1)}}{2} & \text{ if $\phi_{s} =0$} \\
  \frac{\phi_{s}R^s_0 + \sqrt{(\phi_sR^s_0)^2 - 4R^s_0R^r_0(\phi_s -1)}}{2} & \text{ if $\phi_{r} =0$} \\
  \frac{R^s_0 \phi_{s}+R^r_0 + \abs{\phi_{s} R^s_0 - R^r_0}}{2} &   \text{if $\phi_{r}=1$}
\end{cases}
\end{equation}
 From equation ~\eqref{eq:6} we observe that the derivative of  the single variable functions $R_{0}^{c2}(\phi_{s}, 0)$, and $R_{0}^{c2}(0,  \phi_{r})$ are never zero on the interval $(0, 1)$.  The values at the boundary points of $(0,1)$ are given by: 
 \begin{equation}
\label{eq:7}
 R_{0}^{c2}(\phi_{s}, \phi_{r}) =
 \begin{cases}
  R_{0}^{s} & \text{if $\phi_{s}=1$} \\
   (R_{0}^sR_0^r)^{1/2} & \text{ if $\phi_{s} =0, \phi_{r}=0$} \\
  R_{0}^{r} & \text{if $\phi_{s} =0, \phi_{r}=1$}\\
  \end{cases}
\end{equation}
Hence the maximum $R_{0}^{s}$ is attained on $D$ when $\phi_{s}=1$.\\
If $R^{r}_{0} < R^{s}_{0}$, it can be shown in a similar way that the  maximum of $R_0^{c2}$ on $D$ equals $R_{0}^{r}$ and that it is attained when $\phi_{r}=1.$                                
\end{proof}

This result implies that generically (i.e. when $R_0^s\neq R_0^r$),  there is no allocation pair $(\phi_s,\phi_r)$ for which  
the basic reproduction number $R_0^{c2}(\phi_s,\phi_r)$ of model $(\ref{model:2})$ is larger than both reproduction numbers 
$R_0^s$ and $R_0^r$, associated to the models of the isolated migrants and isolated residents respectively. 
The maximal $R_0^{c2}$ is achieved for values of the allocation parameter on the boundary of $D$: 
when $R_0^s>R_0^r$,  the maximum is $R_0^s$, and it is achieved for $\phi_s=1$ and arbitrary values of $\phi_r$, which corresponds to the scenario in which all migrant eggs become migrants, regardless of the fraction of resident eggs that become residents. Similarly, when $R_0^s<R_0^r$,  the maximum is $R_0^r$, and it is achieved for $\phi_r=1$ and arbitrary values of $\phi_s$, which corresponds to the scenario in which all resident eggs become residents, no matter which fraction of migrant eggs become migrants. These conclusions are similar to those for the basic reproduction number $R_0^{c1}$ of model $(\ref{model})$.

\section{Nonlinear density-dependent models}
Thus far we have neglected any density-dependent effects. Here we assume that 
transition and survival probabilities depend on the density in each stage. This may be due to stage-specific competition for resources and/or space. 
We propose the following $n$-stage uncoupled model, which may be used to model an isolated migrant or an isolated resident population:
\begin{equation}
\label{model:1}
\mbi{x}(t+1)= A(\mbi{x}(t))\mbi{x}(t),
\end{equation}\\
where $\mbi{x}=\left( \begin{array}{ccc}
                                      x_{1} \\
                                      x_{2} \\
                                      \vdots\\
                                       x_{n}\end{array} \right), \; A(\mbi{x})=\left( \begin{array}{cccc}
                                      0 & f_{2} & \cdots & f_{n} \\
                                      t_{1}(x_{1}) & 0 & \cdots & 0\\
                               
                                     \vdots & \ddots  &  \vdots & \vdots\\
                                      0 &  \cdots  & t_{n-1}(x_{n-1}) & t_{n}(x_{n})\end{array}\right),$\\

 and assume that $t_{i}(x)$ for $1 \leq i \leq n$,  and  $f_{j}$  for $2 \leq j \leq n$ satisfy: 

\begin{enumerate}  [leftmargin=*, topsep=6pt, itemsep=4pt, label=(A\arabic*), ref=A\arabic*]

\item $t_i(x)$ is $C^1$ on $\mbb{R}_+$ with $ 0< t_{i}(x) \leq 1$ for all  $x \in \mbb{R}_{+} $, and $t_i$ is strictly decreasing. \label{A:1}
\item The function $s_i(x):=xt_{i}(x)$ is strictly increasing on $\mbb{R}_+$, and there is a bound $m_i \in \mbb{R}$ such that $s_{i}(x) \leq m_i$ 
for all $x$. \label{A:2}
\item $f_{j} \geq 0$, and $f_n>0$.
\label{A:3}
\end{enumerate}
A common choice for $t_i(x)$ is  $\frac{b_{i}}{1+c_{i}x}$ such that $s_{i}(x)$  is the Beverton-Holt function.  It satisfies (\ref{A:1})-(\ref{A:2}) 
when $0<b_{i}\leq 1$ and  $c_{i}>0$ with $m_i=b_i/c_i$, for $1\leq i \leq n$.\\

The linearization of  system ~\eqref{model:1} near the fixed point at the origin is: 
\begin{equation}
\label{linear:1}
\mbi{y}(t+1)= A_{0} \mbi{y}(t),
\end{equation}
where $A_{0}$ is the Jacobian of  $A(\mbi{x})\mbi{x}$  at the origin:  $A_0:=\left( \begin{array}{cccc}
                                      0 & f_{2} & \cdots & f_{n} \\
                                      a_{1} & 0 & \cdots & 0\\
                               
                                     \vdots & \ddots  &  \vdots & \vdots\\
                                      0 &  \cdots  & a_{n-1} & a_{n}\end{array}\right)$, where $a_i:=t_i(0)$ for all $i$.\\

By (\ref{A:1}) and (\ref{A:3}), $A_{0}$ is non-negative and irreducible.  Consequently, as we discussed in the previous sections, we can associate the basic reproduction number $R_{0}$ to the model. It is defined as $R_{0}:=\rho({F_{0}(I-T_{0})^{-1}})$ where $F_{0}$ and $T_{0}$ are obtained from $A_0$ by setting all $t_{i}=0$, and all $f_{i}=0$ respectively. Moreover, the largest eigenvalue in modulus $\lambda$ of $A_{0}$ and $R_{0}$  are either both greater than one, or both less than one, or both equal to one.\\

Although $R_{0}$ is a locally defined quantity, we will show that it determines the global dynamics of 
system ~\pref{model:1}. To prove this we shall apply the Cone Limit  Set Trichotomy of ~\cite{Monotone}. 
Before stating this important result, we need to introduce some definitions regarding monotone systems.
\subsection{Monotone systems}
We let ${\mbb R}^n_+$ be the non-negative cone in ${\mbb R}^n$. It consists of all vectors that have non-negative entries. 
For any $x$ and $y$ in ${\mbb R}^n$, we write $x\leq y$ ($x<y$) whenever $y-x \in {\mbb R}^n_+$ and when also ($x\neq y$). We 
write $x<<y$ if $y-x\in \textrm{Int}({\mbb R}^n_+)$.
\begin{definition}
A map $F: {\mbb{R}^{n}_+}\to {\mbb{R}^{n}_+}$ 
is said to be \textbf{monotone} if for $ x, y \in {\mbb{R}^{n}_+},    x \leq y $ implies that $F(x) \leq F(y)$.  
\end{definition}

\begin{definition}
A map $F: {\mbb{R}^{n}_+}\to {\mbb{R}^{n}_+}$ is \textbf{strongly sublinear} if 
$$ 0< \la < 1, \mbi{x} \gg 0 \Rightarrow \la F(\mbi{x}) \ll F(\la \mbi{x}).$$
\end{definition}

\begin{definition}
For  $x, y \in {\mbb{R}^{n}_+}$ with $x < y$,  we call $[x, y] := \set{z \in {\mathbb{R}^{n}}^{+}: x \leq z \leq y}$  the order interval with endpoints $x$ and $y$.  A subset of  ${\mathbb{R}^{n}_+}$ is \textbf{order bounded} if it is contained in some order interval.
\end{definition}

\begin{definition}
A map $F: {\mbb{R}^{n}_+}\to {\mbb{R}^{n}_+}$ is said to be \textbf{order compact} if takes every order bounded set into a precompact set.
\end{definition}
\begin{theorem}[\textbf{Cone Limit  Set Trichotomy}] 
\label{CLST}
 Assume  $ F: {\mbb{R}^{n}_+}\to {\mbb{R}^{n}_+} $ is continuous and monotone and has following properties for some integer $r\geq 1$, 
 where $F^r:=F\circ \dots \circ F$ denotes the $r$-fold composition of $F$ with itself:

\begin{enumerate} [leftmargin=*, topsep=6pt, itemsep=4pt, label=(H\arabic*), ref=H\arabic*]

\item  $F^{r}$ is strongly sublinear.
\label{H:1}
\item $F^{r}(x)  \gg 0$ for all $x > 0$.
\label{H:2}
\item $F^{r}$ is order compact.
\label{H:3}
\end{enumerate}

 Then precisely one of the following holds for the discrete dynamical system $\mbi{x}(t+1)=F(\mbi{x}(t))$:
 
 \begin{enumerate} [leftmargin=*, topsep=6pt, itemsep=4pt, label=(R\arabic*), ref=R\arabic*]

 \item each nonzero orbit is unbounded.
 \label{R:1}
 \item each orbit converges to $0$,  the unique fixed point of F.
 \label{R:2}
 \item each nonzero orbit converges to $q\gg 0$,  the unique non-zero fixed point of F.
 \label{R:3}
 \end{enumerate}
 
\end{theorem}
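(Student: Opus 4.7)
My plan is to follow the standard route for such trichotomies in monotone dynamical systems: reduce to the case $r=1$, establish uniqueness of a positive fixed point using strong sublinearity, and then squeeze arbitrary orbits between monotone sub- and super-solution orbits whose convergence follows from order compactness. First I would set $G:=F^r$ and observe that every statement about bounded orbits, convergence to $0$, and convergence to a fixed point of $F$ is equivalent (by continuity of $F$ and invariance of fixed points) to the corresponding statement for $G$. So I would work exclusively with $G$, which is monotone, strongly sublinear, strongly positive in the sense of \ref{H:2}, and order compact.

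For \textbf{uniqueness} of a nonzero fixed point, suppose $p$ and $q$ are both fixed points of $G$ with $p,q\gg 0$ (note that every nonzero fixed point of $G$ is $\gg 0$ by \ref{H:2}). Define $\lambda^{*}:=\sup\{\lambda>0:\lambda p\leq q\}$; this supremum is finite and positive because $p,q\gg 0$. If $\lambda^{*}<1$, then $\lambda^{*}p\leq q$, so by monotonicity $G(\lambda^{*}p)\leq G(q)=q$; but strong sublinearity gives $\lambda^{*}p=\lambda^{*}G(p)\ll G(\lambda^{*}p)\leq q$, which (by continuity) allows a slightly larger multiple of $p$ to lie below $q$, contradicting maximality of $\lambda^{*}$. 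Hence $\lambda^{*}\geq 1$, so $p\leq q$, and by symmetry $q\leq p$, giving $p=q$.

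For \textbf{existence and convergence}, take an arbitrary $x>0$ and set $x_{k}:=G^{k}(x)$; by \ref{H:2} we may assume $x_{1}\gg 0$. Using strong sublinearity together with strong positivity, I would produce $\alpha\in(0,1)$ and $M>1$ with $\alpha x_{1}\leq G(\alpha x_{1})$ and $G(M x_{1})\leq M x_{1}$ (a subsolution and a supersolution). Concretely, strong sublinearity applied to $y\gg 0$ gives $\alpha G(y)\ll G(\alpha y)$, so for $y=x_{1}$ and any sufficiently small $\alpha$ the vector $\alpha x_{1}$ is a subsolution; a dual scaling argument using monotonicity and the bounded-ness features built into sublinearity produces the supersolution. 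Then the $G$-iterates of $\alpha x_{1}$ form a monotone increasing sequence, and those of $M x_{1}$ form a monotone decreasing sequence. Order compactness \ref{H:3} makes both sequences precompact (assuming they are order bounded), and their monotone limits are fixed points, hence by uniqueness equal to a common $q\gg 0$. Sandwiching $x_{k}$ (eventually) between these two monotone orbits via $\alpha x_{1}\leq x_{1}\leq M x_{1}$ and monotonicity forces $x_{k}\to q$. The three alternatives of the trichotomy then arise according to whether the increasing orbit from $\alpha x_{1}$ remains bounded: if some positive orbit is unbounded, then by monotonicity and sublinearity all nonzero orbits are unbounded (\ref{R:1}); otherwise a fixed point exists in $\mathrm{Int}(\mathbb{R}^{n}_{+})$ and all nonzero orbits converge to it (\ref{R:3}); and if no nonzero fixed point exists, all orbits must converge to $0$ (\ref{R:2}).

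The main obstacle I expect is the dichotomy within the bounded case, namely showing that the monotone iterates of the subsolution $\alpha x_{1}$ cannot converge to $0$ while those of the supersolution $Mx_{1}$ converge to a nonzero limit (or vice versa). The delicate point is turning the strict inequality $\alpha x_{1}\ll G(\alpha x_{1})$ into a quantitative lower bound that persists along the orbit, so that the increasing sequence cannot drift to zero. This step is where the interplay between strong sublinearity and strong positivity does all the work, and it is also where one must be most careful about the reduction from $F^{r}$ back to $F$, since intermediate iterates of $F$ need not themselves be strongly positive or strongly sublinear. Once this pinning estimate is in place, the trichotomy follows cleanly from the monotone convergence theorem applied in the order-compact setting.
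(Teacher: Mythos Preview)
The paper does not prove Theorem~\ref{CLST} at all: it is quoted from the reference \cite{Monotone} and used as a black box in the proofs of Theorems~\ref{p:1} and~\ref{p:2}. There is therefore no ``paper's own proof'' to compare your proposal against.

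That said, your sketch follows the standard Krause--Ranft/Hirsch--Smith line of argument for cone limit set trichotomies, and the uniqueness step via the maximal $\lambda^{*}$ is correct. Two places deserve more care if you intend to fill it in. First, the reduction from $F$ to $G=F^{r}$ is not symmetric: a fixed point of $G$ is a priori only a period-$r$ point of $F$, so you must argue (using that $F$ commutes with $G$ and that the positive fixed point of $G$ is unique) that $F(q)=q$. Second, your construction of a supersolution $Mx_{1}$ is underspecified: strong sublinearity gives $G(\lambda y)\gg \lambda G(y)$ for $0<\lambda<1$, which by the substitution $y\mapsto My$, $\lambda\mapsto 1/M$ yields $G(My)\ll MG(y)$, but this is not the inequality $G(My)\leq My$ you need unless $G(y)\leq y$ already holds. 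In the standard proof the supersolution instead comes from order compactness (bounded orbits have $\omega$-limit points, which furnish the needed comparison), not from a scaling argument alone.
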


\subsection{A density-dependent model for an isolated population}
  \begin{theorem}
 \label{p:1}
 Assume that (\ref{A:1})-(\ref{A:3}) hold, and that for all $i=1,\dots,n$, the maps $s_i$ are strongly sublinear on ${\mbb R}_+$. 
If $R_{0} < 1$ then the zero fixed point is the only fixed point of the system \eqref{model:1} and it is globally asymptotically stable. If $R_{0} >1$, then system \eqref{model:1} has a unique locally stable positive fixed point which attracts all nonzero orbits.
\end{theorem}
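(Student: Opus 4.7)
The plan is to set $F(x)=A(x)x$ and apply the Cone Limit Set Trichotomy (Theorem~\ref{CLST}) to the discrete system $x(t+1)=F(x(t))$. First I would verify continuity and monotonicity of $F$: the first row of $F(x)$ is $\sum_{j=2}^{n} f_j x_j$, a nonnegative linear combination, and each subsequent row is a sum of the strictly increasing functions $s_i(x_i)=x_i t_i(x_i)$ from (\ref{A:2}), so coordinatewise monotonicity is immediate. The three hypotheses of Theorem~\ref{CLST} would then be checked for a suitable iterate $F^r$.

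For (\ref{H:3}), the global bound $s_i(x)\leq m_i$ from (\ref{A:2}) forces $F$ to map $\mathbb{R}^n_+$ into a set where coordinates $2,\dots,n$ are uniformly bounded, so $F^2$ carries the entire orthant into a globally bounded set; continuity then delivers (\ref{H:3}) for every $r\geq 2$. This same boundedness also immediately rules out alternative (\ref{R:1}) regardless of the value of $R_0$. For (\ref{H:2}), the positivity $a_i=t_i(0)>0$ together with $f_n>0$ makes the linearization $A_0$ irreducible with a positive diagonal entry $a_n$, hence primitive; the life-cycle graph argument carries over to the nonlinear map, so initial positivity propagates through all stages and $F^r(x)\gg 0$ for every $x>0$ provided $r$ is sufficiently large (e.g.\ $r=2n$). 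The delicate hypothesis is (\ref{H:1}): because the first row of $F$ is state-linear, $F$ itself fails strong sublinearity, but for $r\geq 2$ each coordinate of $F^r$ is assembled by composition and summation from the strongly sublinear maps $s_i$, and the positivity $f_n>0$ ensures that the first coordinate of $F^2$ picks up a strict sublinear contribution through $(F(x))_n=s_{n-1}(x_{n-1})+s_n(x_n)$. Combining strong sublinearity of each $s_i$ with strict monotonicity of the $s_i$ then yields $F^r(\lambda x)\gg \lambda F^r(x)$ for every $x\gg 0$ and $0<\lambda<1$.

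With the trichotomy in hand the two cases follow quickly. When $R_0<1$, the relation~\eqref{relation} gives $\rho(A_0)<1$; assumption (\ref{A:1}) then yields $s_i(x)\leq a_i x$ for all $x\geq 0$, so $F(x)\leq A_0 x$ coordinatewise, and iterating with monotonicity gives $F^k(x)\leq A_0^k x\to 0$. Every orbit therefore converges to the origin, so (\ref{R:2}) holds and the zero fixed point is the only fixed point. When $R_0>1$ we have $\rho(A_0)>1$; picking the Perron right eigenvector $v\gg 0$ of $A_0$ and Taylor expanding, $F(\varepsilon v)=\varepsilon\rho(A_0) v+o(\varepsilon)$, which for small $\varepsilon>0$ strictly exceeds $\varepsilon v$ coordinatewise. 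Monotone iteration then shows the orbit from $\varepsilon v$ does not converge to zero, ruling out (\ref{R:2}); together with the earlier exclusion of (\ref{R:1}), only (\ref{R:3}) remains, producing a unique non-zero fixed point $q\gg 0$ which attracts all nonzero orbits.

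The principal obstacle is verifying (\ref{H:1}): because the egg-producing first row of $F$ is state-linear, strict sublinearity must be propagated through a full iterate, which requires simultaneously invoking strong sublinearity of each $s_i$, strict monotonicity of the $s_i$, and the positivity $f_n>0$ to guarantee strict improvement in \emph{every} coordinate, including the first. A secondary care point is the exclusion of (\ref{R:2}) in the $R_0>1$ case: alternative (\ref{R:2}) requires every orbit to converge to zero, so one must exhibit at least one nonzero orbit that does not, which the Perron-eigenvector direction furnishes via the linearization. Local stability of the positive fixed point in the $R_0>1$ case can be recovered after the fact from a monotone sandwich argument, since any small perturbation of $q$ can be bounded above and below by orbits both of which converge to $q$ under (\ref{R:3}).
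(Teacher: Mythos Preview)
Your proposal is correct and follows essentially the same route as the paper's proof: both apply the Cone Limit Set Trichotomy to $F(x)=A(x)x$, verify (\ref{H:1})--(\ref{H:3}) for an iterate $F^r$ (the paper takes $r=n$, you allow $r$ large enough), rule out (\ref{R:1}) via the global bounds $s_i\le m_i$, and then settle the two cases by the entrywise comparison $F(x)\le A_0 x$ when $R_0<1$ and a Perron-eigenvector Taylor argument when $R_0>1$. Your closing remark on local stability via a monotone sandwich is a detail the paper's own proof leaves implicit despite its appearance in the statement.
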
 
\begin{proof}: We will prove that the Cone Limit Set Trichotomy, Theorem ~\ref{CLST}, applies to the system \eqref{model:1}. 
Let $F(x(t)) = A(\mbi{x}(t)) \mbi{x}(t)$, and denote  $F(x)=(F_{1}(x), F_{2}(x), \cdots, F_{n}(x))$, it follows that  
$F_{1}(x)=\sum_{i=2}^{n} f_{i} x_{i}$, $F_{i}(x)= s_{i-1}(x_{i-1})$ for $i=2  \dots n-1$, and that $F_{n}(x)=s_{n-1}(x_{n-1}) + s_{n}(x_{n}).$ First, we will show that $F(x)$ satisfies the hypotheses ~\pref{H:1}  - ~\pref{H:3}. Each  $s_i$ is strictly increasing by (\ref{A:2}), and therefore $F$ is monotone on ${\mbb{R}^{n}_+}$. Clearly, $F$ is continuous is as well.

Remark that $F$ is not strongly sublinear because 
$\la F_{1}(x)-F_{1}({\la x})=0$ for $0< \la < 1$ and all $x$. Thus, we consider $F^{2}$, and we note that 
$F^{2}_{1}(x)=f_ns_n(x_n)+\sum_{i=2}^{n} f_{i} s_{i-1}(x_{i-1}), \; F^{2}_{2}(x)= s_{1}(\sum_{j=2}^nf_jx_j)$,  $F^{2}_{i}(x)= s_{i-1} \circ s_{i-2}(x_{i-2}), \, i=3, \dots, n-1$, and $F^{2}_{n}(x) = s_{n-1}\circ s_{n-2}(x_{n-2}) + s_{n}\circ(s_{n-1}(x_{n-1}) + s_{n}(x_n))$. 
It is easy to verify that the composition of any two functions $s_i$, is strongly sublinear because each $s_i$ is strictly increasing by (\ref{A:2}) and strongly sublinear by assumption. It follows that $ F^{2}$ is strongly sublinear because each coordinate function $F^{2}_i$ is  
a positive linear combination of strongly sublinear functions, hence strongly sublinear as well. A similar argument shows that 
$F^{r}$ is also strongly sublinear for any $r\geq 2$.

It follows from (\ref{A:1}) and (\ref{A:2}) that $s_i(x_i)>0$ for all $x_i>0$ and all $i=1,\dots, n$. This property and 
the fact that $f_n>0$ by (\ref{A:3}) imply that $F^n(x_0)>>0$ whenever $x_0>0$. 
  
Since $F^{n}$ is a continuous function on a finite dimensional space, it is order compact.
 
In summary, we have shown that $F$ satisfies hypotheses (\ref{H:1})-(\ref{H:3}) of the  Cone Limit Set Trichotomy for  $r=n$.

From the above calculation of the coordinate functions, it follows that $F^2$ maps ${\mbb R}^n_+$ into the order interval $[0,a]$, where 
$$
a:=\begin{pmatrix}
f_nm_n+\sum_{i=2}^nf_im_{i-1}\\
m_1\\
m_2\\
\vdots \\
m_{n-1}\\
m_{n-1}+m_n
\end{pmatrix}
$$
Since $F$ is continuous, $F([0,a])$ is compact, and therefore all orbits of system $(\ref{model:1})$ are bounded, hence also order bounded.  In particular, \pref{R:1} does not hold for system $(\ref{model:1})$. Therefore, either \pref{R:2} or \pref{R:3} must hold, and we show next that which particular case occurs, depends on the value of $R_0$.\\
{\bf Case 1}: $R_{0} < 1$.\\
In this case we will show that \pref{R:3} does hold for $F$. Here,  $\lambda= \rho(A_{0})<1$ because $R_0<1$, and thus 
$A^{t}_0\to 0$ as $t\to +\infty$.
 Let ${\mbi x}(t)$ be the solution of ~\eqref{model:1} with initial condition ${\mbi x}(0)>0$. We observe that $A({\mbi x})\leq A_0$ for all ${\mbi x}\geq 0$, where the matrix inequality is understood to hold entry-wise.  This implies that $\mbi{x}(t)\leq A^{t}_{\mbi{0}} \mbi{x}(0)\to 0$ as $t\to +\infty$, and therefore also $\mbi{x}(t) \to 0$  as $t\to +\infty$.  In summary, if $R_0<1$, then system ~\eqref{model:1} has a unique  fixed point at  the origin which attracts all orbits, by \pref{R:2} of Theorem \ref{CLST}.\\ 
{\bf Case 2}: $R_{0} >1$.\\
 In this case we will show that ~\pref{R:2} does not hold for $F$. Here, $\lambda=\rho(A_{0})>1$ because $R_0>1$. By the Perron-Frobenius Theorem, $A_0$ has a positive eigenvector $v$ corresponding to the eigenvalue $\lambda$. Set $\mbi{x}_{0}= \epsilon v$ for some small $\epsilon > 0$ to be determined momentarily. Using the Taylor expansion of $F$ near the origin, we have that 
$F(\mbi{x}_{0}) = \epsilon A_{0} v +O(\epsilon^2) = \epsilon \lambda v+O(\epsilon^2)$. Therefore, $F(\mbi{x}_{0})-\mbi{x}_{0} = \epsilon v (\lambda -1)+O(\epsilon^2) >> 0 $ for all sufficiently small $\epsilon>0$. Monotonicity of $F$ then implies that $\mbi{x}_{0} << F(\mbi{x}_{0}) \leq F^{2} (\mbi{x}_{0}) \leq  \cdots $, and hence the orbit of $\mbi{x}_{0}$ does not converge to $0$.  By Theorem \ref{CLST}, it follows that system \eqref{model:1} has  a unique positive fixed point, which attracts all nonzero orbits.\\

\end{proof}

\subsection{A coupled density-dependent model with a single egg stage}
We consider the following density-dependent coupled model of a n-stage migrant and a m-stage resident population:
 \begin{equation}
\label{model:2c}
\mbi{X}(t+1)= A_1({\mbi X}(t)){\mbi X}(t),
\end{equation}  
where
$A_1({\mbi{X}})=\left( \begin{array}{cccccccc}
                                      0 & f_{2}^s & \cdots & f_{n}^s&f_2^r &\cdots & f_{m-1}^r&f_m^r\\
                                      \phi t_{1}^s(x_{1}) & 0 & \cdots & 0&0&\cdots& 0&0 \\
                                      \vdots & \ddots  &  \vdots & \vdots &\vdots&\cdots&\vdots&\vdots\\
                                      0 &  \cdots  & t_{n-1}^s(x_{n-1}) & t_{n}^s(x_{n})&0&\cdots&0&0\\
                                      (1-\phi)t_1^r(x_{1})&0&\cdots&0&0&\cdots&0&0\\
                                      0&0&\cdots&0&t_2^r(x_{n+1})&\cdots & 0&0\\
                                      \vdots&\vdots&\cdots&\vdots&\vdots& \ddots&\vdots& \vdots \\
                                      0&0&\cdots&0&0&\cdots & t_{m-1}^r(x_{n+m-2})& t_m^r(x_{n+m-1})
                                      \end{array}\right)$,\\
and 
\begin{center}
$\mbi{X}=\left(
\begin{array}{c}
\text{eggs}(x_1)   \\
\text{first migrant stage} (x_2)  \\
\vdots \\
\text{last migrant stage}  (x_n)  \\
\text{first resident stage}  (x_{n+1})  \\
\vdots \\
\text{last resident stage}  (x_{n+m-1})  \\
\end{array}
\right)$
\end{center}

 Here, $\phi \in (0,1)$  represents the fraction of  eggs from the common egg pool that will enter the first migrant stage, and hence $1-\phi$ is the fraction that will enter the first resident stage. 
The fecundities $f_{i}^{s}$ and $f_{j}^{r}$ and transition probability functions $t_{1}^{s}(x_{1})$, $t_{i}^{s}(x_{i})$ and $t_{1}^{r}(x_{1}), t_{j}^{r}(x_{n+j-1})$ for $i= 2, \cdots, n$ and $j= 2, \cdots, m$ are assumed to satisfy the hypotheses ~\pref{A:1}-\pref{A:3} of two respective  models of the form ~\pref{model:1}, one for migrants and another for residents, for which we are using the functions $s^{s}_{i}(x)= x t_{i}^{s}(x)$  for $i=1, \cdots, n$ and $s^{r}_{j}(x)= x t_{j}^{r}(x)$ for $j=1, \cdots, m.$ To each model we also associate a basic reproduction number as in the previous subsection, and denote these as $R_0^s$ and $R_0^r$.\\

The Jacobian $A_{1,0}$ of system ~\eqref{model:2c} at the origin is obtained from $A_{1}({\mbi{X}})$ by replacing $t_{i}^s(x_{i})$ with $t_{i}^{s}(0)$ for $i=1, \cdots, n$ and  $t_{j}^s(x_{j})$ with $t_{j}^{r}(0)$ for $j=1, \cdots, m$.  The matrix $A_{1,0}$ is non-negative and irreducible, and we can associate a basic reproduction number to the linearized system, defined as  $R^{1c}_{0}=\rho({F^{1}_{0}(I-T^{1}_{0})^{-1}})$ where $F^{1}_{0}$ and $T^{1}_{0}$ are obtained from $A_{1,0}$ by setting respectively all $t_{i}^s=t_j^r=0$, and all $f_{i}^s=f_j^r=0$.  Note that in particular, the relationship $(\ref{convex})$ between the basic reproduction numbers 
associated to the coupled model, $R_0^{1c}$, and the two isolated models, $R_0^s$ and $R_0^r$, continues to hold in this context. Similarly, the sensitivity properties of $R_0^{1c}$ with respect  the parameters of the linearized system, and in particular  the remarks concerning the location of the maximum of $R_0^{1c}$, when considered as a function of $\phi$, as discussed in subsection 2.1, remain valid here as well.

We also have the following global stability result for the coupled system ~\eqref{model:2c}.

\begin{theorem}
 \label{p:2}
 Assume that (\ref{A:1})-(\ref{A:3}) hold for each of the two isolated population models. Assume also that
the maps $s_i^s$ and $s_j^r$ are strongly sublinear on ${\mbb R}_+$ for all $i=1,\dots,n$ and all $j=1,\dots,m$. 
If $R_{0}^{1c} < 1, $ then the origin is the unique fixed point of system ~\eqref{model:2c} and it is globally asymptotically stable. If $R_{0}^{1c} >1$, then system \eqref{model:2c} has a unique locally stable positive fixed point  which attracts all nonzero orbits.
\end{theorem}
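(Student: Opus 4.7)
My plan is to apply the Cone Limit Set Trichotomy (Theorem~\ref{CLST}) to the map $F(\mathbf{X}) := A_1(\mathbf{X})\mathbf{X}$ on $\mathbb{R}_+^{n+m-1}$, following the blueprint used in the proof of Theorem~\ref{p:1} for the isolated population model. First I would write out the coordinate functions: $F_1(\mathbf{X}) = \sum_{i=2}^{n} f_i^s x_i + \sum_{j=2}^{m} f_j^r x_{n+j-1}$, $F_2(\mathbf{X}) = \phi\, s_1^s(x_1)$, $F_{n+1}(\mathbf{X}) = (1-\phi)\, s_1^r(x_1)$, and all other components are either a single map $s_i^s$ or $s_j^r$ evaluated at the previous coordinate, or (in the last migrant and last resident coordinates) a sum of two such maps. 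Since each $s$ is strictly increasing by (\ref{A:2}) and $\phi, 1-\phi > 0$, $F$ is continuous and monotone on $\mathbb{R}_+^{n+m-1}$.

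Next I would verify hypotheses (\ref{H:1})--(\ref{H:3}) for $F^r$ with $r$ chosen large enough. As in the isolated case, $F$ itself fails to be strongly sublinear because $F_1$ is linear in $\mathbf{X}$. After one more iteration, however, $F_1^2$ becomes a positive linear combination of strongly sublinear $s$-maps (with some compositions in the last-stage contributions), and every other coordinate of $F^2$ is a single $s$-map, a sum of two $s$-maps, or a composition of two $s$-maps. Since strictly increasing strongly sublinear functions are closed under composition and positive linear combinations, $F^2$ is strongly sublinear, and so is $F^r$ for every $r \geq 2$. Irreducibility of $A_{1,0}$, together with $s_i(x) > 0$ for $x > 0$ (from (\ref{A:1})--(\ref{A:2})) and $f_n^s, f_m^r > 0$ (from (\ref{A:3})), implies $F^r(\mathbf{X}_0) \gg 0$ for every $\mathbf{X}_0 > 0$ once $r$ exceeds the diameter of the underlying graph, e.g.\ $r = n + m - 1$, giving (\ref{H:2}). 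Order compactness (\ref{H:3}) is automatic, $F$ being continuous on a finite-dimensional space.

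To rule out outcome (\ref{R:1}) I would use the bounds $s_i^s \leq m_i^s$ and $s_j^r \leq m_j^r$ from (\ref{A:2}): every coordinate of $F^2(\mathbf{X})$ is then bounded by an explicit constant depending only on the $m_i^s, m_j^r$ and the fecundities, so $F^2$ maps $\mathbb{R}_+^{n+m-1}$ into an explicit order interval $[0,a]$, and continuity of $F$ forces all orbits to be order bounded. Hence either (\ref{R:2}) or (\ref{R:3}) must hold, and I would separate the two subcases via $R_0^{1c}$, exactly paralleling the two cases in the proof of Theorem~\ref{p:1}. If $R_0^{1c} < 1$, then $\rho(A_{1,0}) < 1$ by \eqref{relation}; since $t_i^s, t_j^r$ are decreasing by (\ref{A:1}), we have $A_1(\mathbf{X}) \leq A_{1,0}$ entrywise for all $\mathbf{X} \geq 0$, and monotone iteration yields $\mathbf{X}(t) \leq A_{1,0}^t\, \mathbf{X}(0) \to 0$, producing (\ref{R:2}). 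If $R_0^{1c} > 1$, the Perron eigenvector $v \gg 0$ of $A_{1,0}$ together with the Taylor expansion $F(\epsilon v) = \epsilon\, \rho(A_{1,0})\, v + O(\epsilon^2)$ gives $F(\epsilon v) \gg \epsilon v$ for small $\epsilon > 0$; by monotonicity the orbit of $\epsilon v$ is nondecreasing and bounded away from $0$, ruling out (\ref{R:2}) and forcing (\ref{R:3}).

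The main obstacle I anticipate is the bookkeeping in verifying strong sublinearity of $F^2$ rather than any new conceptual issue: the coupled model has a shared egg coordinate that receives contributions from both the migrant and resident chains, so $F_1^2$ mixes $s$-maps from both families, and one must check that the strict inequality $\lambda s(x) < s(\lambda x)$ survives the combination. Once this algebra is dispatched by the general fact that positive combinations and compositions of strictly increasing strongly sublinear maps are strongly sublinear, the remaining ingredients (order boundedness via (\ref{A:2}), the linear domination $A_1(\mathbf{X}) \leq A_{1,0}$ in the subcritical case, and the Perron-eigenvector argument in the supercritical case) are essentially a direct transcription of the isolated-population proof.
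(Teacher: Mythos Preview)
Your proposal is correct and follows essentially the same route as the paper's own proof: write out the coordinate functions of $G(\mbi{X})=A_1(\mbi{X})\mbi{X}$, observe that each is either linear or a nonnegative linear combination of the strongly sublinear maps $s_i^s$, $s_j^r$, and then invoke the Cone Limit Set Trichotomy exactly as in the proof of Theorem~\ref{p:1}. In fact your write-up supplies more detail than the paper, which simply notes the form of the coordinate functions and defers the remaining steps (strong sublinearity of the iterate, order boundedness, and the two $R_0^{1c}$ cases) to the argument already given for the isolated model.
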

\begin{proof}
We apply Theorem \ref{CLST}  to  system ~\pref{model:2c}. Let $G(\mbi{X})=A^{1}({\mbi{X}})\mbi{X}$ be obtained from  system ~\eqref{model:2c}.
Letting $G(\mbi{X})=(G_{1}(\mbi{X}), G_{2}(\mbi{X}), \cdots, G_{n+m-1}(\mbi{X}))$, we have more explicitly  that $G_{1}(\mbi{X})=\sum_{i=2}^{n}f^{s}_{i} x_{i}+ \sum_{i=2}^{m+n-1}f^{r}_{i}x_{i}$, \;\: $G_{2}(\mbi{X}) = \phi {s}_{1}^{s} (x_{1})$,  
  $G_{i}(\mbi{X})= s^{s}_{i-1}(x_{i-1})$ \;\:for \;\:$3 \leq i \leq n-1,  \;\: G_{n}(\mbi{X})= s^s_{n-1}(x_{n-1}) + s^s_{n}(x_{n}), \;\: G_{n+1}(\mbi{X})=(1-\phi_{s}) s_{1}^{r}(x_{1}), \;\: G_{n+j}(\mbi{X}) = s^{r}_{j}(x_{n+j-1})$   for  $2 \leq j \leq m-2$ and $G_{n+m-1}(\mbi{X})=s^r_{m-1}(x_{n+m-2})  + s^r_{m}(x_{n + m-1})$.

 All $G_k(\mbi{X})$ are either linear functions, or nonnegative linear combinations of strongly sublinear functions $s_l^s$ or $s_q^r$. 
 The rest of the proof is then similar to the proof of Theorem \ref{p:1}.
\end{proof}

\subsection{A coupled density-dependent model with two distinct egg stages}
A density-dependent coupled nonlinear model can be obtained from the linear model ~\eqref{model:2} in which the transition probability constants $t^{s}_{i}$ are replaced by functions $t^{s}_{i}(x_{i})$ for $i=1, \cdots, n$, and the constants $t^{r}_{j} $ are replaced by functions $t^{r}_{j}(x_{n+j})$ for $j=1, \cdots, m$. The coupled dynamics between an n-stage migrant and an m-stage resident population is then given by:
 \begin{equation}
\label{model:3}
\mbi{X}(t+1)= A_{2}({\mbi{X(t)}}) \mbi{X}(t),
\end{equation}\\
with
$A_{2}({\mbi{X}}) = \begin{pmatrix} 
 \phi_{s}*A_{n}^{s}(\mbi{X}) & (1-\phi_{r})* B_{n \times m}^{r}(\mbi{X})\\
 (1-\phi_{s})* B_{m \times n}^{s}(\mbi{X}) &  \phi_{r}*A_{m}^{r}(\mbi{X}) \\
 \end{pmatrix}. $\\

Here, the allocation parameters $\phi_s$ and $\phi_r$ belong to the interval $(0,1)$, and have the same interpretation as in subsection 2.2. Also, the submatrices in $A_2(\mbi{X})$ are defined in a similar way as in  model ~\eqref{model:2}, although here 
the transition probabilities are functions of the state rather than constants due to density dependence (fecundities remain constants). Again we assume that  all transition probability functions and fecundities satisfy the hypotheses ~\pref{A:1}-~\pref{A:3} as in two associated models of the form ~\pref{model:1}, one for migrants and one for residents, and where $s^{s}_{i}(x)= x \; t_{i}^{s}(x)$  for $i=1, \cdots, n$ and $s^{r}_{j}(x)= x \;t_{j}^{r}(x)$ for $j=1, \cdots, m.$ 
To each of these models we also associate a basic reproduction number based on the linearization at the origin, in the same way as in the previous subsection. We denote these as $R_0^s$ and $R_0^r$.

To the coupled model $(\ref{model:3})$, we also associate a basic reproduction number $R_0^{c2}$, based on the linearization at the origin. The relation between $R_0^{c2}$ on one hand, and $R_0^s$ and $R_0^r$ on the other, as discussed in subsection 2.2, remains valid here as well.

The global behavior of system $(\ref{model:3})$ is as follows:

\begin{theorem}  Assume that (\ref{A:1})-(\ref{A:3}) hold for each of the two isolated population models. Assume also that
the maps $s_i^s$ and $s_j^r$ are strongly sublinear on ${\mbb R}_+$ for all $i=1,\dots,n$ and all $j=1,\dots,m$. 
If  $R^{c2}_{0} <1$ then the fixed point at the origin is the only fixed point of system $(\ref{model:3})$, and it is globally asymptotically stable. If $R^{c2}_{0} >1$, 
then system $(\ref{model:3})$ has a unique positive locally stable fixed point  which attracts all nonzero orbits.
\end{theorem}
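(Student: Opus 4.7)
The plan is to apply the Cone Limit Set Trichotomy (Theorem \ref{CLST}) to the map $H(\mbi{X}) := A_2(\mbi{X})\mbi{X}$, following the same template that worked for Theorems \ref{p:1} and \ref{p:2}. First I would write out the coordinate functions $H_k(\mbi{X})$ explicitly, observing that on each of the two ``diagonal blocks'' the coordinate functions are either linear combinations of fecundities $f_i^s, f_j^r$ (appearing in the two rows corresponding to the egg stages), or of the form $\phi_s s_i^s(x_i)$, $(1-\phi_s) s_i^s(x_i)$, $\phi_r s_j^r(x_{n+j})$, $(1-\phi_r)s_j^r(x_{n+j})$. The key structural point is that the two rectangular coupling blocks $B^s$ and $B^r$ contribute \emph{only} in the two egg rows, which are linear in $\mbi{X}$, and the surviving nonegg rows are exactly as in the uncoupled migrant/resident models scaled by positive constants.

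Next I would verify the hypotheses \pref{H:1}-\pref{H:3} for $H^r$ with $r$ sufficiently large. Monotonicity and continuity of $H$ follow because each $s_i^s, s_j^r$ is strictly increasing by (\ref{A:2}) and all coefficients $f_\cdot, \phi_\cdot, 1-\phi_\cdot$ are nonnegative. $H$ itself is not strongly sublinear since the two egg rows are linear in $\mbi{X}$, but already $H^2$ replaces every linear egg coordinate by a positive combination of compositions $s_i^s\circ s_j^s$, $s_i^s\circ s_j^r$ and their swaps (together with linear egg contributions at the \emph{next} egg stage, but these get consumed in the next iteration). The composition of any two strongly sublinear strictly increasing maps is strongly sublinear, and nonnegative linear combinations of strongly sublinear maps are strongly sublinear, so $H^r$ is strongly sublinear for all $r$ large enough (concretely for $r\geq 2(n+m)$ the linear egg rows have been replaced everywhere). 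Strict positivity $H^r(\mbi{X})\gg 0$ for $\mbi{X}>0$ follows because irreducibility of $A_{2,0}$ implies that for $r\geq n+m$ every coordinate of $H^r(\mbi{X})$ receives a positive contribution from at least one nonzero component of $\mbi{X}$ via the chain of positive maps $s_i^s, s_j^r, \phi_s, \phi_r, 1-\phi_s, 1-\phi_r, f_i^s, f_j^r$. Order compactness is automatic in finite dimension from continuity.

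To rule out \pref{R:1}, I would use (\ref{A:2}) to bound each nonegg coordinate of $H(\mbi{X})$ by a constant multiple of the corresponding $m_i^s$ or $m_j^r$. The egg coordinates can be controlled at the next iterate, so that $H^2$ maps $\mbb{R}^{n+m}_+$ into an explicit order interval $[0,a]$ analogous to the one constructed in the proof of Theorem \ref{p:1}. All orbits are therefore bounded, and one of \pref{R:2} or \pref{R:3} must hold.

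Finally, the dichotomy based on $R_0^{c2}$ is handled exactly as in Theorem \ref{p:1}. When $R_0^{c2}<1$, the Jacobian $A_{2,0}$ at the origin has spectral radius less than one (by the relation $(\ref{relation})$ applied to the coupled linearization), $A(\mbi{X})\leq A_{2,0}$ entrywise by (\ref{A:1}), and therefore $\mbi{X}(t)\leq A_{2,0}^t\,\mbi{X}(0)\to 0$, forcing \pref{R:2}. When $R_0^{c2}>1$, $A_{2,0}$ has a positive Perron eigenvector $v$ with eigenvalue $\lambda>1$; choosing $\mbi{X}_0=\varepsilon v$ for sufficiently small $\varepsilon>0$ and Taylor-expanding gives $H(\mbi{X}_0)-\mbi{X}_0 = \varepsilon(\lambda-1)v + O(\varepsilon^2)\gg 0$, so by monotonicity the orbit of $\mbi{X}_0$ is increasing and bounded, hence convergent to a nonzero fixed point, ruling out \pref{R:2} and forcing \pref{R:3}. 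The main technical obstacle, and the reason I devote the second paragraph to it, is the careful bookkeeping that the two linear egg rows do not obstruct strong sublinearity of a sufficiently high iterate $H^r$; once this is done, the rest is parallel to the proofs of the preceding two theorems.
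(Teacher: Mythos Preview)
Your approach is correct and is essentially the same as the paper's: apply the Cone Limit Set Trichotomy to $H(\mbi{X})=A_2(\mbi{X})\mbi{X}$, observing that the coordinate functions are either linear (the two egg rows) or nonnegative combinations of the strongly sublinear $s_i^s,s_j^r$, and then argue exactly as in Theorem~\ref{p:1}. Two small bookkeeping points: the allocation parameters $\phi_s,\phi_r$ multiply only the \emph{fecundity} (egg) rows, not the transition rows, so the non-egg coordinates are simply $s_{i-1}^s(x_{i-1})$ etc.\ without a $\phi$-factor; and already $H^2$ is strongly sublinear (the egg rows become positive linear combinations of single $s$-functions, not compositions), so your estimate $r\geq 2(n+m)$ is far more than needed, though harmless.
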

                                      
 \begin{proof}
Yet again, the proof of this theorem is an application of Theorem \ref{CLST} . Let $H(\mbi{X})=A_{2}({\mbi{X}})\mbi{X}$.
With $H(\mbi{X})=(H_{1}(\mbi{X}), H_{2}(\mbi{X}), \cdots, H_{m+n}(\mbi{X}))$, we have that $H_{1}(\mbi{X})=\phi_{s}(\sum_{i=2}^{n}f^{s}_{i} x_{i})+ (1-\phi_{r})(\sum_{i=n+2}^{2n}f^{r}_{i}x_{i})$,
$H_{i}(\mbi{X})=s^{s}_{i-1}(x_{i-1}) $, for $2 \leq i \leq n-1$, $H_{n}(\mbi{X})= s^{s}_{n-1}(x_{n-1}) + s^{s}_{n}(x_{n}),$ $H_{n+1}(\mbi{X})=(1-\phi_{s})(\sum_{i=2}^{n}f^{s}_{i} x_{i})+ \phi_{r}(\sum_{i=n+2}^{m+n}f^{r}_{i}x_{i})$, $H_{n+j}(\mbi{X})=s^{r}_{j}(x_{n+j}) $, for $ 2 \leq j \leq m-1,$  and  $H_{m+n}(\mbi{X})= s^{r}_{m-1}(x_{m+n-1}) + s^{r}_{m}(x_{m+n})$.

The coordinate functions $H_{i}(\mbi{X})$  of $H(\mbi{X})$ are either linear functions, or non-negative linear combinations of strongly sublinear functions  $s_l^s$ or $s_q^r$. The rest of the proof is then similar to the proof of Theorem \ref{p:1}.
\end{proof}

 \section{Conclusion} 
We developed four models of partial migration with the example of salmonid fishes in mind. The models address the cases of one pool of eggs (with a fraction becoming migrant and the rest resident) and two pools of eggs (one pool becomes migrant and the other resident), with and without density dependence. The evolutionary decision to become migratory or resident can take place at the egg stage or juvenile stage.  
We found that in the absence of density dependence that asymptotic growth in these models is governed by the value of the basic reproduction number. Under density dependence, the asymptotic dynamics are also governed by the value of a locally defined basic reproduction number, which determines whether the population goes extinct or settles at a unique fixed point consisting of a mixture of migrants and residents. 
Whether migration is determined at the egg or juveniles made no difference in the asymptotic dynamics between models. 
Our analysis shows that the basic reproduction number alone can be used to predict the stable coexistence or collapse of populations with partial migration.
One result of our analyses is that population persistence of both migrant and resident forms can occur with density dependence alone. 
However, our analysis did not consider this persistence in the context of evolution, in which new strategies that involve different allocation to resident and migrant forms can invade the population.
This question could be addressed with future research involving methods such as adaptive dynamics \cite{diekmann_beginners_2004}.

\section{Acknowledgments}
HAO and DAL acknowledge the U.S. Forest Service for partial support, PDL acknowledges partial support from NSF-DMS-1411853. We also thank an anonymous reviewer for helpful comments on an earlier version of the manuscript.

\bibliographystyle{siam}
\bibliography{density}

\begin{thebibliography}{10}

\bibitem{allen}
{\sc L.~Allen and P.~van~den Driessche}, {\em The basic reproduction number in
  some discrete-time epidemic models}, Journal of Difference Equations and
  Applications, 14 (2008), pp.~1127--1147.

\bibitem{belthoff_partial_1991}
{\sc J.~R. Belthoff and S.~A. Gauthreaux~Jr}, {\em Partial migration and
  differential winter distribution of {House} {Finches} in the eastern {United}
  {States}}, Condor,  (1991), pp.~374--382.

\bibitem{busby_status_1996}
{\sc P.~Busby, T.~C. Wainwright, G.~J. Bryant, L.~J. Lierheimer, R.~S. Waples,
  F.~W. Waknitz, and I.~V. Lagomarsino}, {\em Status {Review} of {West} {Coast}
  {Steelhead} from {Washington}, {Idaho}, {Oregon}, and {California}.}, Aug.
  1996.

\bibitem{caswell_matrix_2000}
{\sc H.~Caswell}, {\em Matrix {Population} {Models}}, Sinauer Associates,
  Sunderland, Mass, 2nd edition~ed., Sept. 2000.

\bibitem{chapman_ecology_2011}
{\sc B.~B. Chapman, C.~Brönmark, J.-Ã. Nilsson, and L.-A. Hansson}, {\em The
  ecology and evolution of partial migration}, Oikos, 120 (2011),
  pp.~1764--1775.

\bibitem{Population}
{\sc J.~M. Cushing}, {\em An introduction to structured population dynamics},
  SIAM,  (1998).

\bibitem{diekmann_beginners_2004}
{\sc O.~Diekmann and {others}}, {\em A beginner's guide to adaptive dynamics},
  Banach Center Publications, 63 (2004), pp.~47--86.

\bibitem{dodson_evolutionary_2013}
{\sc J.~J. Dodson, N.~Aubin-Horth, V.~Thériault, and D.~J. Páez}, {\em The
  evolutionary ecology of alternative migratory tactics in salmonid fishes},
  Biological Reviews, 88 (2013), pp.~602--625.

\bibitem{grayson_life_2011}
{\sc K.~L. Grayson, L.~L. Bailey, and H.~M. Wilbur}, {\em Life history benefits
  of residency in a partially migrating pond-breeding amphibian}, Ecology, 92
  (2011), pp.~1236--1246.

\bibitem{hazel_modeling}
{\sc W.~N. Hazel and R.~Smock}, {\em Modeling selection on conditional
  strategies in stochastic environments},  (1993), pp.~147--154.

\bibitem{hazel_polygenic_1990}
{\sc W.~N. Hazel, R.~Smock, and M.~D. Johnson}, {\em A polygenic model for the
  evolution and maintenance of conditional strategies}, Proceedings of the
  Royal Society of London. Series B: Biological Sciences, 242 (1990),
  pp.~181--187.

\bibitem{Monotone}
{\sc M.~W. HIRSCH and H.~SMITH}, {\em Monotone maps: a review}, Journal of
  Difference Equations and Applications,  (2005).

\bibitem{kaitala_theory_1993}
{\sc A.~Kaitala, V.~Kaitala, and P.~Lundberg}, {\em A {Theory} of {Partial}
  {Migration}}, American Naturalist, 142 (1993), pp.~59--81.
\newblock WOS:A1993LW99900004.

\bibitem{kokko_modelling_2007}
{\sc H.~Kokko}, {\em Modelling for field biologists and other interesting
  people}, Cambridge University Press, 2007.

\bibitem{kokko_dispersal_2001}
{\sc H.~Kokko and P.~Lundberg}, {\em Dispersal, migration, and offspring
  retention in saturated habitats}, The American Naturalist, 157 (2001),
  pp.~188--202.

\bibitem{lack_problem_1943}
{\sc D.~Lack}, {\em The problem of partial migration}, British Birds, 37
  (1943), pp.~122--131.

\bibitem{li-schneider}
{\sc C.-K. Li and H.~Schneider}, {\em Applications of perron frobenius theory
  to population dynamics}, Journal of Mathematical Biology, 44 (2002),
  pp.~450--462.

\bibitem{lundberg_evolutionary_2013}
{\sc P.~Lundberg}, {\em On the evolutionary stability of partial migration},
  Journal of Theoretical Biology, 321 (2013), pp.~36--39.

\bibitem{sloat_individual_2014}
{\sc M.~R. Sloat, G.~H. Reeves, and B.~Jonsson}, {\em Individual condition,
  standard metabolic rate, and rearing temperature influence steelhead and
  rainbow trout ( \textit{{Oncorhynchus} mykiss} ) life histories}, Canadian
  Journal of Fisheries and Aquatic Sciences, 71 (2014), pp.~491--501.

\bibitem{taylor_predicting_2007}
{\sc C.~M. Taylor and D.~R. Norris}, {\em Predicting conditions for migration:
  effects of density dependence and habitat quality}, Biology Letters, 3
  (2007), pp.~280--284.

\bibitem{tomkins_population_2004}
{\sc J.~L. Tomkins and G.~S. Brown}, {\em Population density drives the local
  evolution of a threshold dimorphism}, Nature, 431 (2004), pp.~1099--1103.

\end{thebibliography}

\end{document}